\newtheoremstyle{thm}
  {9pt}{9pt}{\itshape}{}{\bfseries}{}{.5em}{}
\newcommand{\longdash}[1][2em]{%
  \makebox[#1]{$\m@th\smash-\mkern-7mu\cleaders\hbox{$\mkern-2mu\smash-\mkern-2mu$}\hfill\mkern-7mu\smash-$}}
\newcommand{\omitskip}{\kern-\arraycolsep}
\theoremstyle{thm}
\newtheorem{thm}{Theorem}[section]
\newtheorem{cor}[thm]{Corollary}
\newtheorem{lemma}[thm]{Lemma}
\newtheoremstyle{defin}
  {9pt}{9pt}{}{}{\bfseries}{}{.5em}{}
\theoremstyle{defin}
\newtheoremstyle{exm}
  {9pt}{9pt}{}{}{\scshape}{}{.5em}{}
\theoremstyle{exm}
\newtheoremstyle{proof}
  {}{}{}{}{\itshape}{:}{.5em}{}
\theoremstyle{proof}
\newcommand{\Z}{{\mathbb Z}}
\newcommand{\R}{{\mathbb R}}
\DeclareMathOperator{\conv}{Conv}
\def\sm{\smallsetminus}
\def\<{\langle}
\def\>{\rangle}
\def\0{{\mathbf 0}}
\def\.{\hskip.06cm}
\def\conv{{\text {\rm {conv}} }}
\newcommand{\V}{\mathcal{V}}
\renewcommand{\mod}{\text{mod} \,}
\title{How many $n$-vertex triangulations does the $3$-sphere have?}
\author{Eran Nevo and Stedman Wilson}
\address{
Department of Mathematics,
Ben Gurion University of the Negev,
Be'er Sheva 84105, Israel
}
\email{nevoe@math.bgu.ac.il}
\email{stedmanw@gmail.com}
\thanks{Research was partially supported by Marie Curie grant IRG-270923 and ISF grant.}
\begin{document}

\maketitle

\begin{abstract}
It is known that the $3$-sphere has at most $2^{O(n^2 \log n)}$ combinatorially distinct triangulations with $n$ vertices. Here we construct at least $2^{\Omega(n^2)}$ such triangulations.
\end{abstract}

\section{Introduction}
For $d\geq 3$ fixed and $n$ large,
Kalai \cite{Kalai-manyspheres} constructed $2^{\Omega(n^{\lfloor d/2 \rfloor})}$ combinatorially distinct
 $n$-vertex triangulations of the $d$-sphere (the \emph{squeezed} spheres), and concluded from
Stanley's upper bound theorem for simplicial spheres~\cite{Stanley:CohenMacaulayUBC-75}
an upper bound of $2^{O(n^{\lceil d/2 \rceil}\log n)}$ for the number of such triangulations.  In fact, this upper bound readily follows from the Dehn-Sommerville relations, as they imply that the number of $d$-dimensional faces is a linear combination of the number of faces of dimension $\leq \lceil d/2\rceil-1$, and hence is at most $O(n^{\lceil d/2\rceil})$. Thus, as already argued in \cite{Kalai-manyspheres}, the number of different triangulations is at most
$\binom{\binom{n}{d}}{O(n^{\lceil{d/2}\rceil})}/n!$, namely at most $2^{O(n^{\lceil d/2 \rceil}\log n)}$.

For $d$ odd this leaves a big gap between the upper and lower bounds; most striking for $d=3$.
Pfeifle and Ziegler \cite{Pfeifle-Ziegler:many3-spheres} constructed $2^{\Omega(n^{5/4})}$ combinatorially different $n$-vertex triangulations of the $3$-sphere. Combined with the $2^{O(n \log n)}$ upper bound for the number of combinatorial types of $n$-vertex simplicial $4$-polytopes \cite{GoodmanPollack:fewPolytopes-86} (see also \cite{Alon:fewPolytopes}) it shows that most triangulations of the $3$-sphere are not combinatorially isomorphic to boundary complexes of simplicial polytopes.

The bound in \cite{Pfeifle-Ziegler:many3-spheres} is obtained by constructing a polyhedral $3$-sphere with $\Omega(n^{5/4})$ combinatorial octahedra among its facets.
Our bound will follow from constructing a polyhedral $3$-sphere with $\Omega(n^2)$ combinatorial \emph{bipyramids} among its facets. (A bipyramid is the unique simplicial $3$-polytope with $5$ vertices.)
The idea for the construction is as follows: consider the boundary complex $C$ of the cyclic $4$-polytope with $n$ vertices.
\begin{itemize}
\item{}Find particular $\Theta(n)$ simplicial $3$-balls contained in $C$, with disjoint interiors.
\item{}On the boundary of each such $3$-ball find particular $\Theta(n)$ pairs of adjacent triangles  (each pair forms a square), such that these squares have disjoint interiors, and the missing edge in each such pair is an interior edge of its $3$-ball.
\item{}Replace the interior of each such $3$-ball with the cone from a new vertex over each boundary square (forming a bipyramid) and over each remaining boundary triangle (forming a tetrahedron).
\item{}Show that the particular $3$-balls and squares chosen have the property that the above construction results in a polyhedral $3$-sphere.
\end{itemize}

Specifically, we prove the following.

\begin{thm}
For each $n \geq 1$, there exists a $3$-dimensional polyhedral sphere with $5n+4$ vertices, such that $n^2$ of its facets are combinatorially equivalent to a bipyramid.
\label{t:complex}
\end{thm}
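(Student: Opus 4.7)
The plan is to carry out the four-step construction outlined above, starting from $C = \partial C_4(4n+4)$, the boundary complex of the cyclic $4$-polytope on $4n+4$ vertices. This $C$ is a simplicial $3$-sphere that is $2$-neighborly (every pair of vertices spans an edge) and has $\Theta(n^2)$ facets, namely the $4$-subsets $\{i,i+1,j,j+1\}$ (together with certain wraparound sets) picked out by Gale's evenness condition. Adding the $n$ new cone vertices $w_1, \ldots, w_n$ in step~3 brings the total vertex count from $4n+4$ up to $5n+4$, matching the statement.

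For step~1, I would exhibit an explicit family of $n$ simplicial $3$-balls $B_1, \ldots, B_n \subset C$ with pairwise disjoint interiors, each supported on a block of $\Theta(n)$ vertices and containing $\Theta(n)$ facets. A naive choice such as the star of a single edge $\{i,i+1\}$ does not work directly: its facets $\{i,i+1,j,j+1\}$ are shared by many other edge-stars, so one needs a more structured design (for instance, unions of facets indexed by carefully chosen blocks of consecutive vertices) to force disjoint interiors. For step~2, within each boundary $2$-sphere $\partial B_k$ I select $n$ disjoint pairs of adjacent triangles $(\{a_i,u_i,v_i\}, \{b_i,u_i,v_i\})$ whose missing diagonals $\{a_i,b_i\}$ lie in the interior of $B_k$. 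Two-neighborliness of $C$ automatically provides the edges $\{a_i,b_i\}$; the task is to arrange each $B_k$ so that these diagonals actually lie in its interior and so that the corresponding squares tile part of $\partial B_k$ without overlap.

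For step~3, in each $B_k$ I excise the open interior and introduce $w_k$. For each selected boundary square I combine the two cone tetrahedra $\{w_k,a_i,u_i,v_i\}$ and $\{w_k,b_i,u_i,v_i\}$ into a single bipyramidal $3$-cell with apexes $a_i, b_i$ and equatorial triangle $\{u_i,v_i,w_k\}$; the shared triangle $\{w_k,u_i,v_i\}$ and the axis edge $\{a_i,b_i\}$ then lie in the interior of this cell and are absent from the polyhedral complex. The remaining boundary triangles of $\partial B_k$ are coned from $w_k$ as ordinary tetrahedra. This produces $n \cdot n = n^2$ bipyramidal facets in all. For step~4, the surgery topologically replaces each $3$-ball by another $3$-ball, so the underlying space remains a $3$-sphere; the disjointness of interiors guarantees consistent cell attachments, and a local link check at each face confirms that the resulting complex is polyhedral. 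The principal obstacle lies in steps~1 and~2: engineering the family $\{B_k\}$ together with the associated squares so that pairwise disjointness of ball interiors, interiority of the missing diagonals, and $\Theta(n)$ squares per ball all hold simultaneously. Once such a combinatorial design is in hand, the verification in step~4 reduces to a routine face-by-face check.
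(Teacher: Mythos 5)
Your proposal restates, essentially verbatim, the four-step outline that the paper itself gives in the introduction, and it correctly identifies the starting object ($C(4n+4,4)$) and the way the vertex count $5n+4$ arises. But it stops precisely where the actual proof begins: you write that ``the principal obstacle lies in steps~1 and~2: engineering the family $\{B_k\}$ together with the associated squares,'' and then you defer that engineering entirely. That engineering \emph{is} the theorem. Nothing in your write-up exhibits a concrete family of balls, proves any of them is a $3$-ball, controls how two such balls meet, locates the interior edges, or shows that the chosen squares have the needed disjointness. Asserting that ``once such a combinatorial design is in hand, the verification in step~4 reduces to a routine face-by-face check'' is also an underestimate: even granting the design, one must prove that the modified complex is a polyhedral complex, which in the paper requires a nontrivial intersection lemma about the disks.

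To make this concrete, here is what the paper does and what your sketch omits. The balls are $B(a)$ for $a$ ranging over $n$ well-spaced even integers in $[n+2,\,3n+1]$; each $B(a)$ is the union of $3n$ explicit facets $I(a,u,i)$ ($u\in[n]$, $i\in[3]$) of the cyclic polytope, certified as facets by Gale's condition. One must then prove: (i) $B(a)$ is a shellable $3$-ball (the paper gives an explicit shelling order and verifies the ball condition case-by-case using a quantitative lemma about $I(a,u,i)\cap I(a,u',j)$ for $u'<u$); (ii) distinct balls share no $2$-face (proved via an arithmetic invariant on triangles) and hence meet only along their boundaries; (iii) a full description of $\partial B(a)$, from which one extracts that the interior edges of $B(a)$ are precisely $E(a,u)=\{a-u,a+u+1\}$; (iv) the disks $D(a,u)$, each a pair of boundary triangles of $I(a,u,1)$ sharing the edge $R(a,u)$, pairwise intersect in at most a single face, and on their boundaries when $a=a'$. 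Only with (i)--(iv) in place does the surgery --- cone from $q(a)$ over $\partial B(a)$, then delete the triangles $\{q(a)\}\cup R(a,u)$ --- demonstrably produce a polyhedral $3$-sphere with $n^2$ bipyramidal facets. Your appeal to $2$-neighborliness to ``automatically provide the edges $\{a_i,b_i\}$'' is also a red herring: the issue is not whether the diagonal is an edge of $C$ (it always is), but whether it is an \emph{interior} edge of the particular ball, which is a delicate combinatorial fact, not a consequence of neighborliness. In short, the proposal is a correct restatement of the strategy but contains none of the content that makes the strategy succeed.
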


Erickson \cite[Sec.8]{Erickson} asked whether there exist $4$-polytopes on $n$ vertices with $\Omega(n^2)$ non-simplicial facets, conjectured there are none, and further conjectured that there are no such polyhedral $3$-spheres.
The latter is refuted by Theorem \ref{t:complex}.
We leave open the question of whether the polyhedral $3$-spheres constructed in Theorem \ref{t:complex} are combinatorially equivalent to the boundary complexes of some $4$-polytopes.

Note that each of the bipyramids in the above theorem can be triangulated independently in two ways --- either into $2$ tetrahedra by inserting its missing triangle or into $3$ tetrahedra by inserting its missing edge --- to obtain a triangulation of the $3$-sphere.
Thus, for $v$ the number of vertices and $m$ the number of bipyramids, we obtain at least $\frac{2^m}{v!}$ combinatorially distinct triangulations.
This gives the following result.

\begin{cor}
The $3$-sphere admits $2^{\Omega(n^2)}$ combinatorially distinct triangulations on $n$ vertices. $\square$
\label{t:sphere}
\end{cor}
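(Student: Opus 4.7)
The plan is essentially bookkeeping: invoke Theorem \ref{t:complex} and count refinements. Fix $n \geq 1$, and let $S$ be the polyhedral $3$-sphere produced by Theorem \ref{t:complex}, with $v := 5n+4$ vertices and with $m := n^2$ bipyramidal facets $B_1,\ldots,B_m$.

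First, I would observe that a bipyramid admits exactly two simplicial refinements \emph{without introducing new vertices}: one inserts the missing triangle (splitting $B_i$ into two tetrahedra), the other inserts the missing edge (splitting $B_i$ into three tetrahedra). These two refinements differ in their interior face-sets, so they yield distinct labeled simplicial complexes on the five vertices of $B_i$.

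Second, since the $B_i$ are facets of the polyhedral sphere $S$, their relative interiors are pairwise disjoint. Consequently, any choice $\epsilon \in \{0,1\}^m$ of refinement for each $B_i$ produces a well-defined simplicial triangulation $T_\epsilon$ of the $3$-sphere, and distinct choices give distinct labeled complexes on the vertex set of $S$ (the refinements differ in at least one bipyramid, hence in at least one face). This yields $2^m = 2^{n^2}$ labeled triangulations, each with $v = 5n+4$ vertices.

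Third, passing to combinatorial equivalence classes, each isomorphism class contains at most $v!$ labeled representatives (the orbit under the action of the symmetric group on vertices), so the number of distinct combinatorial types is at least $2^m / v! = 2^{n^2}/(5n+4)!$. Taking logarithms,
\[
\log_2\!\bigl(2^{n^2}/(5n+4)!\bigr) \;=\; n^2 - \log_2((5n+4)!) \;\geq\; n^2 - O(n \log n) \;=\; \Omega(n^2).
\]
Setting $N := v = \Theta(n)$, this gives $2^{\Omega(N^2)}$ combinatorially distinct triangulations on $N$ vertices, as required. I do not anticipate any substantive obstacle: the geometric content is entirely absorbed into Theorem \ref{t:complex}, and the only point worth checking is that the division by $v!$ is harmless, which follows because $\log_2 v! = O(n \log n)$ is dominated by $n^2$ in the exponent.
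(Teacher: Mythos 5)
Your proposal is correct and matches the paper's argument exactly: triangulate each of the $n^2$ bipyramidal facets independently in one of two ways (missing triangle vs.\ missing edge), observe the refinements are independent since facets have disjoint interiors, and divide by $v!=(5n+4)!$ to pass to combinatorial types, noting this factor is absorbed into $2^{\Omega(n^2)}$. The paper compresses this into the remark preceding the corollary, but the content is identical.
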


\section{Preliminaries}
For background on polytopes, the reader can consult~\cite{Ziegler} or~\cite{Grunbaum}, and for background on simplicial complexes
see e.g.~\cite{Munkres}.

Let $X$ denote a complex, simplicial or polyhedral.
In what follows, always assume $X$ to be \emph{pure}, namely all its maximal faces with respect to inclusion have the same dimension.  We write $\V(X)$ for the set of all vertices of $X$.  By a \emph{facet} of $X$ we mean a face $F \in X$ of maximal dimension.  We use the term $k$-face as a shorthand for $k$-dimensional face, as usual.  Similarly, we call a complex $X$ a \emph{$k$-complex} if all of its facets are $k$-faces.

For a polyhedral ball $X$ we denote by $\partial X$ the \emph{boundary complex} of $X$.  That is, $\partial X$ is the subcomplex of $X$ whose facets are precisely the faces of $X$ that are contained in exactly one facet of $X$.  In particular, if $X$ is a $k$-complex homeomorphic to the  $k$-ball, then $\partial X$ is $(k - 1)$-dimensional, homeomorphic to the $(k-1)$-sphere.  We say that a face $F \in X$ is \emph{interior} to a polyhedral ball $X$ if $F \notin \partial X$.
For $X$ a simplicial complex, the \emph{link} of a face $F\in X$ is the subcomplex
$\{T\in X:\ F\cap T=\emptyset, \ F\cup T\in X\}$, and its (closed) \emph{star} is the subcomplex generated by the faces
$\{T\in X:\ F\subseteq T\}$ under taking subsets.
If $P$ is a \emph{simplex}, then we will identify $P$ with its set of vertices $\V(P)$, or with the simplicial complex $2^{\V(P)}$, when convenient.

We will make use of the following arithmetic notation.  For any integer $n \geq 1$, we will use $[n]$ as a shorthand for $[1, n] \cap \Z$, the set of all integers from $1$ to $n$.  For a real number~$r$, we will denote by $\lfloor r \rfloor$ the floor of $r$, and by $\lceil r \rceil$ the ceiling of $r$.  We will also use the notation $\sigma \in \{+, -\}$, with the convention that $-\sigma \in \{+, -\}$ is the sign different from $\sigma$.

The foundation for our construction is the \emph{cyclic polytope}, so we restate its definition here.  The \emph{moment curve} in $\R^d$ is the curve $\alpha_d : \R \rightarrow \R^d$ defined by
\[
\alpha_d(t) =  (t, t^2, t^3, \dots, t^d).
\]

The convex hull of the image of $[n]$ under $\alpha_d$, which we will denote by $C(n, d)$, is the \emph{cyclic $d$-polytope} with the $n$ vertices $\alpha_d(1), \alpha_d(2), \ldots, \alpha_d(n)$.  The faces of the cyclic polytope admit a simple combinatorial description, called \emph{Gale's evenness condition} (see e.g.~\cite{Grunbaum}, p.~62, and~\cite{Ziegler}, p.~14).  We restate this property here as a lemma.

\begin{lemma} All facets of $C(n, d)$ are $(d - 1)$-simplices.  Furthermore, for any set of $d$ integers $I \subset [n]$, the convex hull $\conv(\alpha_d(I))$ is a facet of $C(n, d)$ if and only if for every $x, y \in [n] \sm I$, there are an even number of elements $z \in I$ satisfying $x < z < y$.
\label{l:cyclic}
\end{lemma}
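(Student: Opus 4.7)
The plan is to prove both parts by exploiting the non-vanishing of Vandermonde determinants evaluated along the moment curve. First I would establish the stronger fact that any $d+1$ points $\alpha_d(t_0), \ldots, \alpha_d(t_d)$ with distinct $t_i \in \R$ are affinely independent in $\R^d$. This follows from the classical Vandermonde identity applied to the augmented $(d+1) \times (d+1)$ matrix with rows $(1, t_i, t_i^2, \ldots, t_i^d)$, whose determinant is $\prod_{i < j}(t_j - t_i) \neq 0$. The simpliciality of every facet is then immediate: a facet lies in an affine hyperplane, which can contain at most $d$ vertices of $C(n,d)$.

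For the facet characterization, fix $I = \{i_1 < \cdots < i_d\} \subset [n]$ and consider the affine hyperplane $H$ spanned by $\{\alpha_d(i_k)\}_{k=1}^d$. The crux of the plan is to write down an affine functional on $\R^d$ defining $H$ in a form that makes signs transparent. To this end, I would take
\[
p(t) := \det\begin{pmatrix} 1 & t & t^2 & \cdots & t^d \\ 1 & i_1 & i_1^2 & \cdots & i_1^d \\ \vdots & \vdots & \vdots & & \vdots \\ 1 & i_d & i_d^2 & \cdots & i_d^d \end{pmatrix}.
\]
Expanding along the first row shows that $p$, viewed as a function of $\alpha_d(t)$, is affine and vanishes on $H$. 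On the other hand, as a polynomial in $t$ of degree $d$ with roots $i_1, \ldots, i_d$, we have $p(t) = C \prod_{k=1}^d (t - i_k)$ for some nonzero constant $C$ (by another Vandermonde evaluation of its leading coefficient). So $\alpha_d(j)$ lies on the side of $H$ determined by the sign of $\prod_{k=1}^d(j - i_k)$.

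Finally, $\conv(\alpha_d(I))$ is a facet of $C(n,d)$ iff $\prod_{k=1}^d (j - i_k)$ has constant sign for all $j \in [n] \sm I$. Equivalently, for any $x < y$ in $[n] \sm I$, the signs at $x$ and $y$ agree iff an even number of the factors $(\cdot - i_k)$ switch sign between $x$ and $y$, i.e., $\#\{k : x < i_k < y\}$ is even. This is exactly Gale's evenness condition. The main content of the proof lies in identifying $p(t)$ and recognizing its double role (linear-algebraic hyperplane equation vs.\ product of linear factors in $t$); beyond that, I do not anticipate any genuine obstacle, since all remaining steps are routine sign bookkeeping.
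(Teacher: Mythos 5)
Your proof is correct. Note that the paper does not prove Lemma~\ref{l:cyclic} at all: it states Gale's evenness condition as a known fact, citing Gr\"unbaum and Ziegler, so there is no ``paper's own proof'' to compare against. Your argument---affine independence of $d+1$ points on the moment curve via the Vandermonde determinant, then introducing the polynomial $p(t)=\det(\cdots)$ which is simultaneously an affine functional vanishing on the candidate hyperplane and a degree-$d$ polynomial $C\prod_k(t-i_k)$ in $t$, and finally reading off the sign pattern---is precisely the standard textbook proof that appears in those references, so it is the expected route and is complete. One point worth making explicit (you implicitly use it) in both directions of the facet characterization: no $\alpha_d(j)$ with $j\notin I$ can lie on the affine hull $H$ of $\alpha_d(I)$, since that would put $d+1$ affinely dependent moment-curve points on a hyperplane; this guarantees that ``supporting'' really means ``all other vertices strictly on one side,'' and that the face cut out is exactly $\conv(\alpha_d(I))$ and no larger.
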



\section{Construction of the polyhedral sphere}

Consider the cyclic $4$-polytope \[C(4n+4, 4) = \conv(\alpha_4([4n+4])).\]  Let $P(n)$ be a polyhedral complex that is combinatorially isomorphic to the boundary complex of $C(4n+4, 4)$.  We label the set of vertices of $P(n)$ by $[4n+4]$,  ordered so that each vertex $i$ of $P(n)$ corresponds to the vertex $\alpha_4(i)$ of $C(4n+4, 4)$, under this isomorphism.  Note that $P(n)$ is homeomorphic to the $3$-sphere.  By Lemma~\ref{l:cyclic}, all the facets of $P(n)$ are tetrahedra.  That is, $P(n)$ is a simplicial complex.

In what follows, we will describe certain faces and subcomplexes of $P(n)$, which we will ultimately use to construct the polyhedral $3$-sphere of Theorem~\ref{t:complex}.




We define a set of integers
\[A(n) = \{m \in [n + 2, \ 3n + 1] \ | \ m = 2k, \ k \in \Z \}.\]
Therefore $|A(n)|=n$.
For $a \in A(n)$ and $u \in [n]$ we define the collections of vertices
\[
	\begin{array}{llllll}
		I(a, u, 1) & = & \{a - u - 1, & a - u, & a + u, & a + u + 1\}, \\
		I(a, u, 2) & = & \{a - u - 1, & a - u, & a + u + 1, & a + u + 2\}, \\
		I(a, u, 3) & = & \{a - u, & a - u + 1, & a + u + 1, & a + u + 2\}.
	\end{array}
\]
Then every set $I(a, u, i)$ is a facet of $P(n)$.  This is because $I(a, u, i)$ satisfies the criteria of Lemma~\ref{l:cyclic} in the case $d = 4$, hence $\alpha_4(I(a, u, i))$ is the set of vertices of a facet of $C(4n+4, 4)$.
For later use, let $I_-(a, u, i)$ (resp. $I_+(a, u, i)$) denote the smallest (resp. largest) two elements in $I(a,u,i)$.




We need the following auxiliary lemma.
\begin{lemma}
For all $a\in A(n)$, $u,u'\in [n]$ and $i, j \in [3]$, if $u' \leq u - 1$, then
\[I(a, u, i) \cap I(a,u', j) \subseteq
	\begin{cases}
		\{a - u, \ a + u, \ a + u + 1\} & i = 1 \\
		\{a - u, \ a + u + 1\} & i = 2 \\
		\{a - u, \ a - u + 1, \ a + u + 1\} & i = 3
	\end{cases}
 \]
\label{l:vertices}
\end{lemma}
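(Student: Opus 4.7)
The plan is to observe that the hypothesis $u' \leq u-1$ forces $I(a,u',j)$ to sit inside a short integer interval, and then to compute the intersection of this interval with each $I(a,u,i)$ directly.

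First I would show that for every $j\in[3]$ and every $u'\leq u-1$, we have
\[
I(a,u',j) \;\subseteq\; [a-u,\ a+u+1]\cap\Z.
\]
This is a routine inequality check. The smallest element appearing across the three definitions is $a-u'-1$, which is $\geq a-u$ because $u'+1\leq u$. The largest element appearing is $a+u'+2$, which is $\leq a+u+1$ because $u'+2\leq u+1$. The two intermediate cases ($a-u'$ and $a-u'+1$ on one side, $a+u'$ and $a+u'+1$ on the other) are then automatic.

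Next I would intersect $[a-u,a+u+1]$ with each of the three sets $I(a,u,i)$. For $i=1$ the set is $\{a-u-1,a-u,a+u,a+u+1\}$, and since $a-u-1<a-u$, only $\{a-u,a+u,a+u+1\}$ survives. For $i=2$ the set $\{a-u-1,a-u,a+u+1,a+u+2\}$ loses both $a-u-1$ and $a+u+2$, leaving $\{a-u,a+u+1\}$. For $i=3$ the set $\{a-u,a-u+1,a+u+1,a+u+2\}$ only loses $a+u+2$, leaving $\{a-u,a-u+1,a+u+1\}$. Combined with the containment from the previous step, this yields exactly the three upper bounds asserted in the lemma.

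I do not expect any real obstacle here: the statement is essentially a bookkeeping check about integer intervals, and the hypothesis $u'\leq u-1$ is calibrated precisely so that $I(a,u',j)$ is squeezed inside the integer window $[a-u,a+u+1]$, whose intersection with the outer set $I(a,u,i)$ is easy to read off. The only point to be careful about is confirming that both endpoints of $I(a,u',j)$ land inside the window for each of the three shapes $j\in\{1,2,3\}$ (since the three shapes are not symmetric around $a$), but once $u'+1\leq u$ is in hand this is immediate.
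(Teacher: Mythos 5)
Your proposal is correct and follows essentially the same route as the paper: both arguments observe that $u'\leq u-1$ forces every element of $I(a,u',j)$ into the window $[a-u,\,a+u+1]$, and then read off the intersection with $I(a,u,i)$ from its definition. The only difference is cosmetic---you make the final intersection-with-the-window step explicit for each $i$, whereas the paper leaves it as ``follows immediately from the definition.''
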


\begin{proof}
Let $m \in I(a, u', j)$.  Then \[a - u' - 1 \leq m \leq a + u' + 2.\]  Since $u' \leq u - 1$, we have $a - u \leq a - u' - 1$ and $a + u' + 2 \leq a + u + 1$.  Therefore
\[a - u \leq m \leq a + u + 1.\]
The lemma now follows immediately from the definition of the sets $I(a, u, i)$.
\end{proof}

For each fixed $a \in A(n)$, we consider the collection
\[B_0(a) = \{I(a, u, i) \ | \ u \in [n], i \in [3]\}\]
of $3n$ facets of $P(n)$.  Let $B(a)$ denote the simplicial complex obtained by taking the closure of $B_0(a)$ under subsets.
We chose the simplicial complex $B(a)$ for two main reasons, which we establish in the following lemmas.  The first is that $B(a)$ is a $3$-ball (see Lemma \ref{l:ball}).  The second is that any two such balls $B(a), B(a')$ intersect ``minimally" (see Lemma~\ref{l:intface} and Lemma~\ref{l:intboundary}).  These two facts will be crucial to our construction of the polyhedral $3$-sphere of Theorem~\ref{t:complex}.

\begin{lemma}  For each $a \in A(n)$, the simplicial complex $B(a)$ is a shellable simplicial $3$-ball.
\label{l:ball}
\end{lemma}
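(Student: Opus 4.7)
The approach is to exhibit an explicit shelling order of the $3n$ facets of $B(a)$, namely the lexicographic order
\[I(a,1,1),\ I(a,1,2),\ I(a,1,3),\ I(a,2,1),\ I(a,2,2),\ I(a,2,3),\ \ldots,\ I(a,n,3).\]
For each facet in this list I must verify that its intersection with the union of all earlier facets is a pure $2$-dimensional subcomplex of its boundary $2$-sphere.

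Since every facet is a $3$-simplex, the simplicial intersection of two facets is just the face spanned by the intersection of their vertex sets. The plan is therefore to compute vertex-set intersections of explicit $4$-element subsets of $[4n+4]$. First I would apply Lemma~\ref{l:vertices} to bound, uniformly in $i$, the intersection of $I(a,u,i)$ with every previously ordered across-block facet $I(a,u',j)$, $u'<u$; the lemma confines these intersections to the three-vertex set indicated for each $i$. Then I would compute the within-block intersections with the at most two facets $I(a,u,i')$, $i'<i$, by direct inspection.

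Assembling these computations, the intersection works out to be a single triangle for $i=1$ and for $i=2$, while for $i=3$ it is the union of two triangles sharing an edge (for $u\ge 2$; when $u=1$ it is still a single triangle). In every case this is a pure $2$-subcomplex of $\partial I(a,u,i)$, which verifies the shelling condition. The main piece of bookkeeping is the case $i=3$, $u\ge 2$, where one must check that the ``across-block'' triangle $\{a-u,a-u+1,a+u+1\}$, contributed by $I(a,u-1,2)$, and the ``within-block'' triangle $\{a-u,a+u+1,a+u+2\}$, contributed by $I(a,u,2)$, share the edge $\{a-u,a+u+1\}$, so that their union is still a pure $2$-complex, and that the remaining stray intersections from other earlier facets are absorbed into this union.

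Finally, since $B(a)$ is shellable and pure of dimension $3$, it is automatically either a $3$-sphere or a $3$-ball. It cannot be a $3$-sphere, since the $2$-face $\{a-n-1,a-n,a+n\}$ is contained in $I(a,n,1)$ but (by a short check against the explicit formulas for $I(a,u,i)$) in no other facet of $B(a)$. Hence $\partial B(a)$ is nonempty and $B(a)$ is a shellable simplicial $3$-ball, as claimed.
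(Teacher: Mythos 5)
Your shelling order is exactly the one the paper uses (the paper's $F_i = I(a,u_i,r(i))$ with $u_i=\lceil i/3\rceil$ is the lexicographic order on $(u,i)$), and your intersection computations — a single triangle when $i\in\{1,2\}$ (for any facet after the first), and the union of the triangles $I(a,u,3)\cap I(a,u-1,2)$ and $I(a,u,3)\cap I(a,u,2)$ sharing the edge $\{a-u,a+u+1\}$ when $i=3$, $u\geq 2$ — agree with the paper's $T_{k,1}$, $T_{k,3}$, and $\{T_{k,2},T_{k,2}'\}$ respectively, with Lemma~\ref{l:vertices} doing the same work of bounding the across-block intersections. The one genuine difference is how the conclusion is drawn: the paper maintains the invariant ``$G_k$ is a $3$-ball'' directly through the induction (gluing a simplex to a ball along a disk lying in both boundaries, using equation~(\ref{e:shellable})), whereas you verify the abstract shelling condition and then invoke the Danaraj--Klee-type principle that a shellable complex of this kind is a ball or sphere, followed by a separate check that some triangle lies in only one facet. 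Your route is a valid shortcut, but the phrase ``shellable and pure of dimension $3$'' is not by itself enough for the ball-or-sphere dichotomy — a ``book'' of three tetrahedra sharing a common triangle is shellable, pure, and $3$-dimensional, yet neither a ball nor a sphere. You need $B(a)$ to be a pseudomanifold, i.e.\ every triangle lies in at most two tetrahedra of $B(a)$; this is immediate here because $B(a)$ is a subcomplex of the $3$-sphere $P(n)$, but it should be said explicitly, and indeed it is precisely this observation that the paper records as~(\ref{e:shellable}). With that one sentence added, your argument is complete.
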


Before giving a formal proof, let us describe a way to ``visualize" $B(a)$. For fixed $i=1,2,3$ let $L(i)$ denote the ``chain" of tetrahedra $L(i)=\{I(a,u,i):\ u\in [n]\}$. In $L(i)$ a tetrahedron $I(a,u,i)$ intersects only the tetrahedra right before and after it in the chain; specifically, it intersects $I(a,u+1,i)$ in one edge and $I(a,u-1,i)$ in its opposite edge. From this description it is easy to see that putting $L(2)$ ``on top of" $L(1)$ forms a simplicial $3$-ball; see Figure~\ref{f:layers}. However, this ball has no interior edges, a property needed later (see Lemma \ref{l:edge}). To fix this, we put $L(3)$ on top of $L(2)$, which gives the simplicial ball $B(a)$.

\begin{figure}[t!]
 \begin{center}
  \begin{overpic}[width=15cm, height=3.7cm]{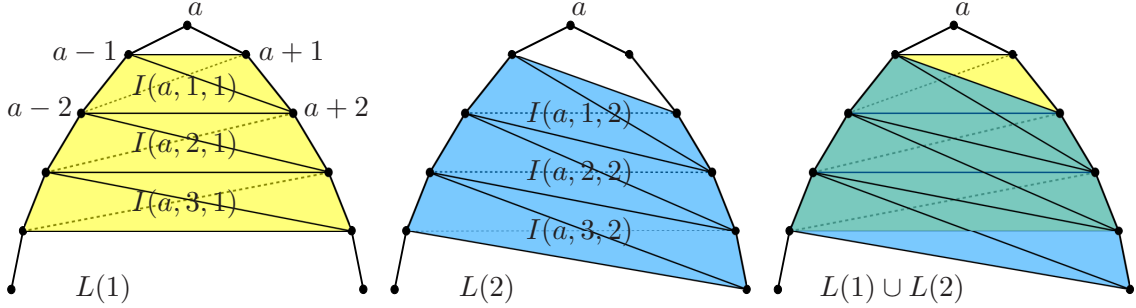}
	 \put(16, 25){$a$}
	 \put(50, 25){$a$}
	 \put(84, 25){$a$}
	 \put(4, 21){$a - 1$}
	 \put(22.3, 21){$a + 1$}
	 \put(0, 16){$a - 2$}
	 \put(26.3, 16){$a + 2$}
	 \put(11, 17.9){$I(a, 1, 1)$}
	 \put(11, 12.8){$I(a, 2, 1)$}
	 \put(11, 7.6){$I(a, 3, 1)$}
	 \put(46, 15.5){$I(a, 1, 2)$}
	 \put(46, 10.3){$I(a, 2, 2)$}
	 \put(46, 5.1){$I(a, 3, 2)$}
	 \put(6, 0){$L(1)$}
	 \put(40, 0){$L(2)$}
	 \put(72, 0){$L(1) \cup L(2)$}
  \end{overpic}
  \caption{Two of the three layers of the ball $B(a)$, together with their union.  For clarity, we display only $3$ tetrahedra in each layer $L(i)$ for $i=1,2$, namely those $I(a, u, i)$ with $u = 1, 2, 3$.}
  \label{f:layers}
 \end{center}
\end{figure}

\begin{proof}[Proof of Lemma \ref{l:ball}]
We exhibit a shelling order of the facets of $B(a)$. In particular, we define an ordering $F_1, F_2, \ldots, F_{3n}$ of the facets of $B(a)$, such that, if $G_k$ denotes the closure under subsets of $\{F_1, F_2, \ldots, F_k\}$, then the simplicial complex $G_k$ is a $3$-ball for all $k \in [3n]$.

This order of the facets is easy to describe.  For $i \in \Z$, let $r(i)$ denote the unique element of $\{1, 2, 3\}$ for which $i \equiv r(i) \ (\mod 3)$.  We also write $u_i = \lceil \frac{i}{3} \rceil$.  Then we define
\[F_i = I(a, u_i, r(i)), \quad \quad \quad i \in [3n].\]

We check that this is indeed a shelling order for $B(a)$.  The facet $F_1$ is a tetrahedron, so $G_1$ forms a $3$-ball.  Now assume inductively that $G_k$ forms a $3$-ball, for some $k \in [3n - 1]$.

Note first of all that since $P(n)$ is a $3$-sphere, every triangle $T \in P(n)$ is a face of exactly two tetrahedra of $P(n)$.  Therefore, for any triangle $T \in F_{k + 1} \cap G_k$, we must have
\begin{equation}
T \in \partial G_k
\label{e:shellable}
\end{equation}
as otherwise, since $G_k$ is a $3$-ball, the triangle $T$ would be a face of three tetrahedra in $P(n)$, namely two tetrahedra of $G_k$, and the tetrahedron $F_{k + 1}$.

We now show that $G_{k + 1}$ is a $3$-ball.  We consider separately the cases $r(k) = 1, 2, 3$.  When $r(k) = 1$, we have $u_k = \lceil \frac{k}{3} \rceil = \lceil \frac{k + 1}{3} \rceil = u_{k + 1}$.  Therefore $F_k = I(a, u_k, 1)$ and $F_{k + 1} = I(a, u_k, 2)$.
Thus $F_{k + 1} \cap F_k$ is the triangle
\[T_{k, 1} = \{a - u_k - 1, \ a - u_k, \ a + u_k + 1\}.\]
If $i \leq k - 1$ then $u_i \leq u_k - 1$, so by Lemma \ref{l:vertices},
\[F_{k + 1} \cap F_i = I(a, u_i, r(i)) \cap I(a, u_k, 2) \subseteq  \{a - u_k, \ a + u_k + 1\} \subset T_{k, 1}. \]
So we have $F_{k + 1} \cap F_i \subseteq T_{k, 1}$ for all $i \leq k$, and $F_{k + 1} \cap F_k = T_{k, 1}$.  Thus $F_{k + 1} \cap G_k = T_{k, 1}$.  By the inductive hypothesis $G_k$ is a $3$-ball.  Therefore $G_{k + 1}$ is the union of the two $3$-balls $G_k$ and $F_{k + 1}$, the intersection of which is the $2$-ball $T_{k, 1}$.  By~(\ref{e:shellable}), this $2$-ball is contained in the boundary of both $G_k$ and $F_{k + 1}$. Hence $G_{k + 1}$ is a $3$-ball in the case $r(k) = 1$.

When $r(k) = 2$, we have $u_{k - 1} = u_k = u_{k + 1}$, so $F_{k - 1} = I(a, u_k, 1)$, $F_k = I(a, u_k, 2)$, and $F_{k + 1} = I(a, u_k, 3)$.  Thus $F_{k + 1} \cap F_k$ is the triangle
\[T_{k, 2} = \{a - u_k, \ a + u_k + 1, \ a + u_k + 2\},\]
and
\[F_{k + 1} \cap F_{k - 1} = \{a - u_k, \ a + u_k + 1\} \subset T_{k, 2},\]
finishing the proof if $k=2$.
Furthermore, if $k>2$, then $F_{k - 3} = I(a, u_k - 1, 2)$, so $F_{k + 1} \cap F_{k - 3}$ is the triangle
\[T_{k, 2}' = \{a - u_k, \ a - u_k + 1, \ a + u_k + 1\}.\]
If $i \leq k - 2$ then $u_i \leq u_k - 1$, so by Lemma \ref{l:vertices},
\[F_{k + 1} \cap F_i = I(a, u_i, r(i)) \cap I(a, u_k, 3) \subseteq \{a - u_k, \ a - u_k + 1, \ a + u_k + 1\} = T_{k, 2}'.\]
Therefore $F_{k + 1} \cap F_i$ is contained in the complex formed by $T_{k, 2}$ and $T_{k, 2}'$ for all $i \leq k$, and $F_{k + 1} \cap F_k = T_{k, 2}$ and $F_{k + 1} \cap F_{k - 3} = T_{k, 2}'$.  We conclude that $F_{k + 1} \cap G_k$ has the two facets $T_{k, 2}$ and $T_{k, 2}'$, which form a $2$-ball. Together with~(\ref{e:shellable}) it follows that $G_{k + 1}$ is a $3$-ball in the case $r(k) = 2$.

Finally, when $r(k) = 3$, we have $r(k + 1) = 1$, and $u_{k + 1} = u_k + 1$ and $u_{k - 1} = u_k$.  Therefore $F_{k + 1} = I(a, u_k + 1, 1)$ and $F_{k - 1} = I(a, u_k, 2)$, so $F_{k + 1} \cap F_{k - 1}$ is the triangle
\[T_{k, 3} = \{a - u_k - 1, \ a + u_k + 1, \ a + u_k + 2\}.\]
Also, if $i \leq k$, then $u_i \leq u_{k + 1} - 1$, so by Lemma \ref{l:vertices},
\[F_{k + 1} \cap F_i = I(a, u_i, r(i)) \cap I(a, u_k + 1, 1) \subseteq \{a - u_k - 1, \ a + u_k + 1, \ a + u_k + 2\} = T_{k, 3}.\]
Therefore, $F_{k + 1} \cap F_i \subseteq T_{k, 3}$ for all $i \leq k$, and $F_{k + 1} \cap F_{k - 1} = T_{k, 3}$.  We conclude that $F_{k + 1} \cap G_k = T_{k, 3}$.  This and~(\ref{e:shellable}) imply that that $G_{k + 1}$ is a $3$-ball in the case $r(k) = 3$.
\end{proof}

We now show that the intersection of two different balls $B(a)$ and $B(a')$ does not contain a triangle.  As an immediate consequence, the balls $B(a)$ and $B(a')$ intersect only in their boundaries, which we state as a separate lemma.

\begin{lemma}
For distinct $a, a' \in A(n)$, the intersection $B(a) \cap B(a')$ does not contain a $2$-face of $P(n)$.
\label{l:intface}
\end{lemma}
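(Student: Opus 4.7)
The plan is to suppose for contradiction that a triangle $T$ of $P(n)$ is contained in $B(a) \cap B(a')$ for distinct $a, a' \in A(n)$. Without loss of generality $a < a'$; since both are even, $a' \geq a + 2$. Then $T \subseteq I(a, u, i) \cap I(a', u', i')$ for some $u, u' \in [n]$ and $i, i' \in [3]$.

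The first key observation is structural: for every choice of parameters, the facet $I(a, u, i)$ decomposes as the disjoint union $I_-(a, u, i) \cup I_+(a, u, i)$, where both parts consist of two consecutive integers, $\max I_-(a, u, i) \leq a$, $\min I_+(a, u, i) \geq a + 1$, and the gap satisfies $\min I_+(a, u, i) - \max I_-(a, u, i) \geq 2$ for every $u \geq 1$ and $i \in [3]$. Since $T$ is obtained from $I(a, u, i)$ by deleting one vertex, this gap forces $T$ to contain exactly one pair of consecutive integers, say $\{m, m+1\}$, which equals either $I_-(a, u, i)$ or $I_+(a, u, i)$; the third vertex $p$ of $T$ then lies in the other pair. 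The pair $\{m, m+1\}$ and the vertex $p$ are intrinsic to $T$, so the same decomposition must also hold inside $I(a', u', i')$.

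The argument then splits into four subcases according to whether $\{m, m+1\}$ is the low or the high pair in each of the two facets. The two ``mixed-side'' subcases are immediate: if $\{m, m+1\}$ is low in $I(a, u, i)$ (so $m + 1 \leq a$) and high in $I(a', u', i')$ (so $m \geq a' + 1 \geq a + 3$), we obtain a direct contradiction, and the other mixed case is symmetric. The main technical step handles the two ``same-side'' subcases via a reflection identity: a short case check over $i \in [3]$ and the choice of side shows that $p$ must lie in the interval $[2a - m - 1, 2a - m + 2]$, and by the same check applied to $a'$, we also get $p \in [2a' - m - 1, 2a' - m + 2]$. Since $a' - a \geq 2$, we have $2a' - m - 1 \geq 2a - m + 3 > 2a - m + 2$, so these intervals are disjoint, producing the desired contradiction.

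I expect the only real obstacle to be the uniform verification of the reflection identity $p \in [2a - m - 1, 2a - m + 2]$, which amounts to routine arithmetic from the explicit definitions of $I(a, u, i)$ and $I_\pm(a, u, i)$, but requires carefully tracking the six combinations of $i$ and side and the small slack in $u$ at the endpoints.
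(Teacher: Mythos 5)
Your proof is correct and takes essentially the same approach as the paper. Your structural observation (a unique consecutive pair $\{m, m+1\}$ together with a remaining vertex $p$) is precisely the paper's claim that $T$ can be written uniquely as $\{k_1, k_2, k_3\}$ with $|k_1 - k_2| = 1$, and your reflection identity $p \in [2a - m - 1,\ 2a - m + 2]$ is exactly equivalent to the paper's bound $4a - 1 \leq k_1 + k_2 + 2k_3 \leq 4a + 5$, since $k_1 + k_2 + 2k_3 = 2m + 1 + 2p$. One small caveat: the case split into ``mixed-side'' and ``same-side'' is unnecessary --- the reflection identity holds for every $i \in [3]$ and every side $\sigma$, so it already covers all four subcases. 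Moreover, the phrase ``the other mixed case is symmetric'' is not quite right as written: with $a < a'$ fixed, the case where $\{m, m+1\}$ is high in $I(a, u, i)$ and low in $I(a', u', i')$ gives $a + 1 \leq m \leq a' - 1$, which is consistent; the contradiction in that subcase has to come from $p$ instead (since then $p \leq a$ and $p \geq a' + 1 \geq a + 3$), not from $m$. Since your reflection identity handles this subcase anyway, the proof stands, but it would be cleaner to drop the mixed/same-side split entirely.
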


\begin{proof}
Let $a, a' \in A(n)$, with $a' < a$.  Suppose there is a triangle $T \subset B(a) \cap B(a')$. As $T$ belongs to some tetrahedron $I(a,u,i)$ with $u\in [n]$ and $i\in [3]$, there is a unique way to write $T= \{k_1, k_2, k_3\}$ such that $|k_1-k_2|=1$, and then it satisfies $\nobreak{4a-1\leq k_1+k_2+2k_3\leq 4a+5}$.  As $T$ also belongs to some tetrahedron $I(a',u',i')$ with $u'\in [n]$ and $i'\in [3]$, it follows that $\nobreak{4a'-1\leq k_1+k_2+2k_3\leq 4a'+5}$. However, by definition of $A(n)$, we have $a'\leq a-2$, hence the intervals $[4a'-1,4a'+5]$ and $[4a-1,4a+5]$ are disjoint, a contradiction.
\end{proof}

\begin{lemma}
For distinct $a, a' \in A(n)$, we have $B(a) \cap B(a') \subset \partial B(a) \cap \partial B(a')$.
\label{l:intboundary}
\end{lemma}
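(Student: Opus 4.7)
The plan is to argue by contradiction using the local structure of a simplicial $3$-ball at an interior face. Let $F\in B(a)\cap B(a')$; by Lemma~\ref{l:intface}, $\dim F\le 1$, so $F$ is either a vertex or an edge. I first observe that for distinct $a,a'\in A(n)$, no facet of $B(a)$ coincides with a facet of $B(a')$: any such common tetrahedron would contain a triangle lying in $B(a)\cap B(a')$, contradicting Lemma~\ref{l:intface}.

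Next I suppose toward a contradiction that $F\notin\partial B(a)$, so $F$ is interior to $B(a)$. Since $B(a')$ is pure $3$-dimensional and $F\in B(a')$, there is some facet $G$ of $B(a')$ containing $F$; by the observation above, $G\notin B(a)$. On the other hand, $B(a)$ is a simplicial $3$-ball by Lemma~\ref{l:ball}, and $F$ is interior to it, so $\lk_{B(a)}(F)$ is a simplicial $(2-\dim F)$-sphere; and since $P(n)$ is a simplicial $3$-sphere, $\lk_{P(n)}(F)$ is a simplicial $(2-\dim F)$-sphere as well. This gives an inclusion $\lk_{B(a)}(F)\subseteq \lk_{P(n)}(F)$ of two connected closed simplicial manifolds of the same dimension.

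The key step is to upgrade this inclusion to an equality, by a standard manifold-propagation argument: starting from any top-dimensional face common to both links, one can reach every top-dimensional face of $\lk_{P(n)}(F)$ through a sequence of adjacencies across shared codimension-$1$ faces, each of which is contained in exactly two top-dimensional faces of $\lk_{P(n)}(F)$ and, simultaneously, in exactly two top-dimensional faces of $\lk_{B(a)}(F)$. Granting this equality, every tetrahedron of $P(n)$ containing $F$ is already a tetrahedron of $B(a)$; in particular $G\in B(a)$, contradicting the observation of the first paragraph. Hence $F\in\partial B(a)$, and a symmetric argument gives $F\in\partial B(a')$. I expect the only nontrivial point to be this manifold-propagation equality, which is a short combinatorial argument; the remaining ingredients are direct appeals to Lemmas~\ref{l:ball} and~\ref{l:intface}.
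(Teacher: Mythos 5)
Your proof is correct and takes essentially the same route as the paper: assume $F$ is interior to $B(a)$, deduce that the star of $F$ in $P(n)$ must already lie in $B(a)$, and conclude that any facet of $B(a')$ containing $F$ gives a shared tetrahedron, contradicting Lemma~\ref{l:intface}. The only difference is that you spell out the manifold-propagation argument showing $\lk_{B(a)}(F)=\lk_{P(n)}(F)$, which the paper leaves implicit in the one line ``since $P(n)$ is a $3$-sphere, the closed star of $F$ in $B(a')$ is a subcomplex of $B(a)$.''
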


\begin{proof}
Let $a, a' \in A(n)$, with $a' \neq a$.  Let $F \in B(a) \cap B(a')$ and suppose by contradiction that $F$ is interior to one of the $3$-balls, say $B(a)$.  Since $F \in B(a')$, and $P(n)$ is a $3$-sphere, this implies that the closed star of $F$ in $B(a')$ is a subcomplex of $B(a)$.  In particular, $B(a) \cap B(a')$ contains a tetrahedron, hence it contains a triangle.  This contradicts Lemma~\ref{l:intface}.
\end{proof}

To understand the boundary complex $\partial B(a)$ of each ball $B(a)$, we introduce the following notation.
For $a \in A(n)$, $u \in [n]$ and $i \in [3]$, let
\[
	\begin{array}{llrllll}
		x_{-}(a, u, 1) & = & a -  u - 1, & \quad & x_{+}(a, u, 1) & = & a + u, \\
		x_{-}(a, u, 2) & = & a - u - 1, & \quad & x_{+}(a, u, 2) & = & a + u + 2, \\
		x_{-}(a, u, 3) & = & a - u + 1, & \quad & x_{+}(a, u, 3) & = & a + u + 2.
	\end{array}
\]
Our next result characterizes the boundary complex of each ball $B(a)$.

\begin{lemma}
For every $a \in A(n)$, the $2$-faces of the boundary complex $\partial B(a)$ are exactly the triangles
\[
		I_\sigma(a, u, i) \cup \{x_{-\sigma}(a, u, i)\}, \\
\]
for $u \in [n]$, $i \in [3]$, and $\sigma \in \{-, +\}$.
\label{l:boundary}
\end{lemma}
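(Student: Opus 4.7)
The plan is to establish both inclusions: every triangle of the claimed form lies in $\partial B(a)$, and conversely no 2-face of $B(a)$ off the list does. For the forward direction, first check that $T := I_\sigma(a, u, i) \cup \{x_{-\sigma}(a, u, i)\}$ is a 3-subset of $I(a, u, i)$ (immediate from the explicit definitions of $I_\sigma$ and $x_\sigma$), so $T$ is a 2-face of $B(a)$. To verify $T \in \partial B(a)$, it suffices to show $T$ is contained in no other facet $I(a, u', i') \in B_0(a)$. When $u' \neq u$, Lemma~\ref{l:vertices} (applied in the appropriate direction, depending on $\min(u, u')$ and the associated index) constrains $I(a, u, i) \cap I(a, u', i')$ to one of the 2- or 3-element sets tabulated there, and a direct comparison with the vertex set of $T$ rules out containment. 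When $u' = u$ and $i' \neq i$, the three pairwise intersections $I(a, u, i) \cap I(a, u, i')$ can be computed by hand and checked to differ from $T$.

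For the converse, fix a facet $F = I(a, u, i)$ and consider its two 2-faces not on the claimed list: these are obtained by deleting one of the ``middle'' vertices of $F$, that is, an element of $I_\sigma(a, u, i)$ different from $x_\sigma(a, u, i)$ for some $\sigma$. For each such 2-face $T'$, we exhibit a second facet $F' \neq F$ of $B_0(a)$ containing $T'$, based on the coincidences
\begin{align*}
I(a, u, 1) \cap I(a, u, 2) &= \{a - u - 1,\ a - u,\ a + u + 1\},\\
I(a, u, 2) \cap I(a, u, 3) &= \{a - u,\ a + u + 1,\ a + u + 2\},\\
I(a, u, 1) \cap I(a, u - 1, 2) &= \{a - u,\ a + u,\ a + u + 1\},\\
I(a, u, 3) \cap I(a, u - 1, 2) &= \{a - u,\ a - u + 1,\ a + u + 1\},
\end{align*}
obtained by direct inspection of the formulas for $I(a, u, i)$. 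Matching each ``missing'' 2-face $T'$ of $F$ with one of these shared triangles shows $T'$ lies in two facets of $B(a)$, hence is interior. A final check that the triple $(u, i, \sigma)$ can be read off from the ordered vertex set of $T$ confirms that the $6n$ listed triangles are pairwise distinct, so the count is consistent.

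The main obstacle will be organizing the case analysis in the forward direction cleanly: Lemma~\ref{l:vertices} yields only a containment, not an equality, for $I(a, u, i) \cap I(a, u', i')$, so for each candidate boundary triangle one must confirm that no ``long-range'' intersection (with $|u - u'|$ large) accidentally accommodates all three vertices. Grouping the cases first by the shape $(i, \sigma)$ of the candidate triangle and then by $i' \in [3]$, and further splitting according to whether $u' < u$, $u' = u$, or $u' > u$, keeps the case count bounded and reduces each subcase to routine inequality checks involving $u$, $u'$, and $a$.
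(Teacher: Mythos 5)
Your high-level plan (both inclusions, with a facet-by-facet case analysis using Lemma~\ref{l:vertices} and explicit computation of pairwise facet intersections) is essentially the same as the paper's, which fixes a triangle $T\subset I(a,u,i)$, identifies the unique other tetrahedron of $P(n)$ containing $T$, and checks whether that tetrahedron is some $I(a,u',i')$. Your four displayed ``coincidences'' are exactly the identifications the paper leaves implicit.

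However, there is a concrete gap at $u=1$, and it is shared by the paper's own (terse, two-out-of-twelve-cases) proof. Your third and fourth coincidences invoke the facet $I(a,u-1,2)$, which belongs to $B_0(a)$ only when $u\ge 2$. For $u=1$ there is no $I(a,0,2)\in B_0(a)$, so the ``missing'' $2$-faces
\[
I(a,1,1)\setminus\{a-2\}=\{a-1,\,a+1,\,a+2\}
\qquad\text{and}\qquad
I(a,1,3)\setminus\{a+3\}=\{a-1,\,a,\,a+2\}
\]
are contained in only one facet of $B(a)$. A direct check (e.g.\ $n=1$, $a=4$, where $B_0(a)=\{\{2,3,5,6\},\{2,3,6,7\},\{3,4,6,7\}\}$) shows that the triangles $\{3,5,6\}$ and $\{3,4,6\}$ are boundary triangles of $B(a)$ but are not among the six listed triangles $I_\sigma\cup\{x_{-\sigma}\}$. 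So the converse direction of your argument fails for $u=1$, and in fact the lemma as stated is false for this edge case: $\partial B(a)$ has $4n+4$ two-faces (as one can read off the shelling), while the list has only $6n$ slots and leaves out two boundary triangles arising from $u=1$. This also propagates to Lemma~\ref{l:edge}: the edge $E(a,1)=\{a-1,a+2\}$ lies on the two extra boundary triangles and hence is a boundary edge, not an interior one. Any correct proof of the intended result must isolate the $u=1$ case rather than reuse the generic coincidence with $I(a,u-1,2)$.
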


\begin{proof}
Let $T \in B(a)$ be a triangle, so $T$ is in $I(a,u,i)$ for some $u\in [n]$ and $i\in [3]$, and can be written uniquely as
$T = \{k_1, k_2, k_3\}$, where $|k_1 - k_2|=1$. Together with $T$, one of $k_3\pm 1$ forms the tetrahedron $I(a,u,i)$, and the other forms a tetrahedron $I'\in P(n)$. Then $T\in \partial B(a)$ if and only if $I'\neq I(a,u',i')$ for all $u'\in [n]$ and $i'\in [3]$. Essentially, fixing~$a$ and~$u$, there are $12$ triangles for which we need to check this (though some work can be saved). We exhibit here a sample of these computations.
A key invariant to compute is the \emph{label average} $e(F)$ of the vertices of a tetrahedron $F$.
For $I(a,u,i)$ this equals
\[e(I(a, u, i)) = \frac{1}{4}\sum_{k \in I(a, u, i)}k = a+\frac{i-1}{2}.\]
In particular, $e(I(a, u, i)) = e(a, i)$ does not depend on $u$.

Let $T=I(a,u,1)\setminus \{a+u+1\}$, so $k_3=a+u$. For $I'=T\cup\{k_3 -1\}$, we obtain $e(I')=a-1/2 <a$, thus $e(I')$ is smaller then all $e(I(a,u',i'))$, implying that $T\in \partial B(a)$.

Let $T=I(a,u,1)\setminus \{a+u\}$, so $k_3=a+u+1$. For $I'=T\cup\{k_3 +1\}$, we obtain $I'=I(a,u,2)\in B(a)$, implying that $T\notin \partial B(a)$.

We leave the other $10$ checks to the reader.
\end{proof}

The classification of Lemma~\ref{l:boundary} yields an important fact about the edges of $B(a)$, as follows. Define the edge $E(a, u)$ of $P(n)$ by
\[
	E(a, u) = \{a - u, a + u + 1\}.
\]
Clearly $E(a, u)$ is an edge of $I(a, u, 1)$, hence an edge of $B(a)$.  As it turns out, the interior edges of $B(a)$ are exactly the edges $E(a, u)$, a consequence of Lemma~\ref{l:boundary}.  This is the content of the next lemma.

\begin{lemma}
The interior edges of $B(a)$ are exactly the edges $\{E(a, u):\ u \in [n]\}$.
\label{l:edge}
\end{lemma}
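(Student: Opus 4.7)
The plan is to classify the interior edges of $B(a)$ by checking, for each edge of $B(a)$, whether it lies in some $2$-face of $\partial B(a)$: the interior edges are exactly those lying in no boundary $2$-face. Since Lemma~\ref{l:boundary} gives an explicit list of the $2$-faces of $\partial B(a)$, the argument reduces to a containment check.

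First I would show that each $E(a,u) = \{a-u, a+u+1\}$ is not contained in any $2$-face of $\partial B(a)$. The two elements of $E(a,u)$ differ by $2u+1 \geq 3$, so they cannot both lie in any single $I_\sigma(a,u',i')$, which is always a pair of consecutive integers. Hence if $E(a,u)$ were contained in a boundary triangle $I_\sigma(a,u',i') \cup \{x_{-\sigma}(a,u',i')\}$, exactly one endpoint would equal $x_{-\sigma}(a,u',i')$ and the other would lie in $I_\sigma(a,u',i')$. I would then run through the six combinations of $(i',\sigma) \in [3] \times \{+,-\}$, using the explicit formulas for $I_\pm$ and $x_\pm$, and in each case derive a contradiction --- typically that the required parameter $u'$ is forced outside $[n]$.

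Next I would show that every other edge of $B(a)$ lies in some $2$-face of $\partial B(a)$. Every edge of $B(a)$ is an edge of some tetrahedron $I(a,u,i)$, and each such tetrahedron contributes only $\binom{4}{2}=6$ edges. Setting aside the one edge of type $E(a,u)$, for each of the remaining edge types of $I(a,u,i)$ (the consecutive edges $\{a-u-1,a-u\}, \{a-u,a-u+1\}, \{a+u,a+u+1\}, \{a+u+1,a+u+2\}$ and the various cross-edges $\{a-u-1,a+u\}, \{a-u-1,a+u+1\}, \{a-u,a+u\}$, etc.) I would exhibit explicit parameters $(u',i',\sigma)$ so that the boundary triangle $I_\sigma(a,u',i') \cup \{x_{-\sigma}(a,u',i')\}$ from Lemma~\ref{l:boundary} contains it. Since there are only finitely many edge types, this is a routine check.

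The main obstacle is the edge-by-edge case analysis in the second step, together with keeping track of the extremal values $u=1$ and $u=n$, where some neighboring tetrahedra of $B(a)$ are absent from the complex; one must ensure that the chosen containing boundary triangle always comes from a legitimate $(u',i') \in [n] \times [3]$, and not from a parameter outside this range.
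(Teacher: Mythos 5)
The route you propose is logically sound in principle --- an edge is interior to $B(a)$ if and only if it lies in no $2$-face of $\partial B(a)$ --- and your Step~2 (every edge other than the $E(a,u)$'s lies in some listed boundary triangle) matches the first half of the paper's argument. For the other direction, however, the paper does not go through the boundary list at all: it checks interiority of $E(a,u)$ \emph{positively}, by exhibiting the full $4$-cycle $(a-u-1,\ a+u,\ a-u+1,\ a+u+2)$ as the link of $E(a,u)$ in $B(a)$, coming from the four tetrahedra $I(a,u,1)$, $I(a,u,2)$, $I(a,u,3)$, $I(a,u-1,2)$. That is a genuinely different, and more robust, route than your containment-avoidance argument.

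And there is a real gap in your Step~1. You claim that running through the six $(i',\sigma)$ cases will always force a contradiction (e.g.\ $u'\notin[n]$). But take $i'=2$ and $u'=u-1$, which lies in $[n]$ whenever $u\geq 2$. Then
\[
I_-(a,u-1,2)\cup\{x_+(a,u-1,2)\}=\{a-u,\ a-u+1,\ a+u+1\}\supseteq E(a,u),
\]
and similarly $I_+(a,u-1,2)\cup\{x_-(a,u-1,2)\}=\{a-u,\ a+u,\ a+u+1\}\supseteq E(a,u)$. So the containment \emph{does} hold, and no contradiction appears; your argument would wrongly conclude that $E(a,u)$ lies in $\partial B(a)$. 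The underlying point is that these particular triangles, although of the form named in Lemma~\ref{l:boundary}, are \emph{not} actually boundary $2$-faces: $\{a-u, a-u+1, a+u+1\}$ is a face of both $I(a,u-1,2)$ and $I(a,u,3)$, hence interior to $B(a)$ for $2\leq u\leq n$. So a literal reading of Lemma~\ref{l:boundary}'s list is exactly where your strategy breaks. The paper's link computation sidesteps this dependence entirely, which is why it is preferable here; if you insist on the containment approach, you must first re-derive which of the listed triangles are genuinely boundary (the $i=2$ ones mostly are not), rather than treating the list as given.
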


\begin{proof}
Fix $u\in [n]$.  By Lemma~\ref{l:boundary} we see that, among the edges of the $3$ tetrahedra $I(a,u,i)$, with $i\in [3]$, the only one not belonging to a boundary triangle of $B(a)$ is $E(a,u)$. Thus, all interior edges of $B(a)$ must be of the form $E(a,u)$ for some $u \in [n]$. To verify that each edge $E(a,u)$ is indeed interior in $B(a)$, we check that the link of $E(a,u)$ in $B(a)$ is a cycle. Indeed, this link contains the $4$-cycle $(a-u-1,a+u,a-u+1,a+u+2)$, and hence is equal to it (as the link in the entire complex $P(n)$ is a cycle).
\end{proof}

We now consider the two triangles of $I(a, u, 1)$ not having $E(a, u)$ as an edge, namely
\[
		T_\sigma(a, u) = I_\sigma(a, u, 1) \cup \{x_{-\sigma}(a, u, 1)\}
\]
for $\sigma \in \{-, +\}$.  Explicitly,
\[
	\begin{array}{lll}
		T_-(a, u) & = &  \{a - u - 1, \ a - u, \ a + u\}, \\
		T_+(a, u) & = & \{a - u - 1, \ a + u, \ a + u + 1\}.
	\end{array}
\]
We define $D(a, u)$ to be the $2$-dimensional simplicial complex obtained as the closure of $\{T_-(a, u), T_+(a, u)\}$ under subsets.  From Lemma~\ref{l:boundary}, we see that each $T_\sigma(a, u)$ is a boundary triangle of $B(a)$.  That is, $D(a, u)$ is a subcomplex of $\partial B(a)$.  We define $R(a, u)$ to be the intersection of the two triangles of $D(a, u)$.   That is,
\[
	R(a, u) = T_-(a, u) \cap T_+(a, u) = \{a - u - 1, a + u\}.
\]
Since $R(a, u)$ is an edge, it follows that $D(a, u)$ is a $2$-ball.  Note also that $R(a, u)$ is the unique interior edge of $D(a, u)$.

Understanding the intersection of distinct disks $D(a, u)$ and $D(a', u')$ is crucial for constructing the polyhedral $3$-sphere of Theorem~\ref{t:complex}.  The relevant properties of this intersection are stated in the following lemma.

\begin{lemma}
For $(a,u)\neq (a',u')$, the disks $D(a,u)$ and $D(a',u')$ intersect in a single face.  When $a = a'$, this intersection lies on the boundary of both disks.
\label{l:thebigone}
\end{lemma}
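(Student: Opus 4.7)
The plan is to compare the vertex sets $\V(D(a,u))=\{a-u-1,a-u,a+u,a+u+1\}$ directly. Observe that this set splits into a lower pair $\{a-u-1,a-u\}$ and an upper pair $\{a+u,a+u+1\}$ of consecutive integers, and that the five edges of $D(a,u)$ are exactly the $2$-subsets of $\V(D(a,u))$ other than $\{a-u,a+u+1\}$.

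\textbf{Case 1 ($a=a'$):} Assume without loss of generality $u<u'$. If $u'\geq u+2$, a direct comparison shows the four pairs of $D(a,u)$ and $D(a,u')$ are pairwise disjoint, so the intersection is empty. If $u'=u+1$, then only the lower--lower and upper--upper matchings contribute, sharing the single vertices $a-u-1$ and $a+u+1$ respectively. Together these form the edge $\{a-u-1,a+u+1\}$, which is an edge of $T_+(a,u)\subset D(a,u)$ and of $T_-(a,u+1)\subset D(a,u+1)$, so the intersection equals this edge. Since it differs from both interior edges $R(a,u)=\{a-u-1,a+u\}$ and $R(a,u+1)=\{a-u-2,a+u+1\}$, it lies on the boundary of both disks.

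\textbf{Case 2 ($a\neq a'$):} Assume without loss of generality $a'<a$ and set $d=a-a'$; since $a,a'\in A(n)$ are even, $d\geq 2$. By Lemma~\ref{l:intface} the intersection contains no $2$-face, hence is at most $1$-dimensional. I classify vertex-coincidences by which pair of $D(a,u)$ meets which pair of $D(a',u')$. Using that two consecutive pairs $\{x,x+1\},\{y,y+1\}$ meet iff $|x-y|\leq 1$: the upper pair of $D(a,u)$ cannot meet the lower pair of $D(a',u')$ (that would force $|d+u+u'+1|\leq 1$, impossible), while the lower--lower, lower--upper, and upper--upper matchings produce non-empty intersection iff $u-u'\in\{d-1,d,d+1\}$, $u+u'\in\{d-2,d-1,d\}$, and $u'-u\in\{d-1,d,d+1\}$, respectively.

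These three conditions are pairwise incompatible for $d\geq 2$ and $u,u'\geq 1$: lower--lower and upper--upper together force $0\in[2d-2,2d+2]$, false; lower--lower and lower--upper together force $2u\in\{2d-3,\ldots,2d+1\}$, whose even admissible values $u\in\{d-1,d\}$ each yield $u'\leq 0$ or violate the lower--upper condition; upper--upper and lower--upper are symmetric. Thus at most one matching contributes. When it shares a single vertex, the intersection is that vertex; when it shares two vertices, they form an edge common to a triangle of $D(a,u)$ and a triangle of $D(a',u')$, so the intersection equals that edge. The main obstacle is precisely this incompatibility bookkeeping in Case 2, which is elementary but requires a careful arithmetic check of each sub-case.
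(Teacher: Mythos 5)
Your proof is correct, and Case~1 ($a=a'$) is essentially the paper's argument stated with the opposite convention ($u<u'$ rather than $u'<u$). Case~2 ($a\neq a'$), however, takes a genuinely different and more elementary route. The paper's proof observes that $E(a,u)$ is the unique missing edge of $D(a,u)$, uses Lemma~\ref{l:intboundary} to show that the intersection cannot contain both vertices of $E(a,u)$ or of $E(a',u')$ (since $E(a,u)$ is interior to $B(a)$, it cannot lie in $\partial B(a)\cap\partial B(a')$), and then concludes that an intersection with at least $3$ vertices would force a shared triangle, contradicting Lemma~\ref{l:intface}. You instead do a direct combinatorial bookkeeping on the four pairings of consecutive pairs, showing at most one pairing can contribute when $|a-a'|\geq 2$, which bounds the shared vertices by two without invoking Lemma~\ref{l:intboundary} at all; your use of Lemma~\ref{l:intface} at the start of Case~2 is actually redundant, since the pairing analysis already yields an intersection of at most two vertices. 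The paper's approach is slicker and exploits the missing-edge and interior-edge structure already set up; yours is more computational but self-contained, and correctly verifies the needed final step that a two-vertex intersection is genuinely an edge of both disks (because each consecutive pair $\{x,x+1\}\subset\V(D(\cdot,\cdot))$ is an edge of its disk). Both are valid.
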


\begin{proof}
Let $a, a' \in A(n)$ and $u, u' \in [n]$ such that $(a, u) \neq (a', u')$.

First we consider the case $a = a'$ and $u' < u$.
Then $u' \leq u - 1$, so by Lemma~\ref{l:vertices}, we have
\[
T_{\sigma'}(a, u') \cap T_{\sigma}(a, u) \subset I(a, u', 1) \cap I(a, u, 1) \subseteq \{a - u, a + u, a + u + 1\},
\]
for every choice of $\sigma, \sigma' \in \{-, +\}$.  Thus
\begin{equation}
\V(D(a, u) \cap D(a, u')) \subseteq \{a - u, a + u, a + u + 1\}.
\label{e:sameball}
\end{equation}
By definition,
\[
	\begin{array}{lll}
		T_-(a, u') & = &  \{a - u' - 1, \ a - u', \ a + u'\}, \\
		T_+(a, u') & = & \{a - u' - 1, \ a + u', \ a + u' + 1\}.
	\end{array}
\]
Since $u' \leq u - 1$, it follows that
\[
	\begin{array}{lll}
		T_-(a, u') \cap \{a - u, a + u, a + u + 1\} & \subseteq &  \{a - u\}, \\
		T_+(a, u') \cap \{a - u, a + u, a + u + 1\} & \subseteq & \{a - u, a + u\}.
	\end{array}
\]
This and~(\ref{e:sameball}) imply that
\[
\V(D(a, u) \cap D(a, u')) \subseteq \{a - u, a + u\}.
\]
As $\{a-u,a+u\}$ is an edge of $T_-(a,u)$ we conclude that
$D(a, u) \cap D(a, u')$ is either an edge or a vertex or empty.  Finally, observe that $\{a - u, a + u\}$ is not one of the edges $R(a, u)$, $R(a, u')$.  Therefore
\[D(a, u) \cap D(a, u') \subset \partial D(a, u) \cap \partial D(a, u').\]

Now consider the case $a' < a$.  In this case, $E(a, u) \neq E(a', u')$.  For otherwise, we may take the average of the vertices of $E(a, u) = E(a', u')$, to obtain
\[
a + \frac{1}{2} = \ \frac{1}{2}\sum_{k \in E(a, u)}k \ = a' + \frac{1}{2},
\]
from which we conclude $a = a'$.

From the definitions, we see that $E(a, u)$ is the only missing edge of $D(a, u)$ (namely, a non-edge of $D(a,u)$ whose vertices are in $D(a,u)$), and $E(a', u')$ is the only missing edge of $D(a', u')$.  It follows that $D(a, u) \cap D(a', u')$ does not contain both vertices of $E(a, u)$, as if it did, the fact that $E(a, u) \neq E(a', u')$ implies that $E(a, u)$ is an edge of $D(a', u')$.  But $E(a,u)$ is interior to $B(a)$, contradicting Lemma \ref{l:intboundary}.
Similarly, $\nobreak{D(a, u) \cap D(a', u')}$ does not contain both vertices of $E(a', u')$.  That is,
\begin{equation}
E(a, u), E(a', u') \not \subset \V(D(a, u) \cap D(a', u')).
\label{e:edgesubset}
\end{equation}

Now suppose (for a contradiction) that $D(a, u) \cap D(a', u')$ has at least $3$ vertices.  Then~(\ref{e:edgesubset}) implies that $D(a, u) \cap D(a', u')$ must contain one of the triangles $T_\sigma(a, u)=T_{\sigma'}(a', u')$, contradicting Lemma~\ref{l:intface}.  It follows that $D(a, u) \cap D(a', u')$ is  either an edge or a vertex or empty.
\end{proof}

We are now ready to construct the polyhedral $3$-sphere that proves Theorem \ref{t:complex}.

\begin{proof}[Proof of Theorem \ref{t:complex}]
For each $a \in A(n)$, we do the following to $P(n)$.  Remove all interior faces of $B(a)$.  Add a new vertex $q(a)$ to $P(n)$, and cone from this vertex to $\partial B(a)$, to obtain a new simplicial $3$-ball, call it $B'(a)$.

By Lemma \ref{l:intboundary}, the collection of all tetrahedra that results from the above process (both new tetrahedra and tetrahedra which were not removed from $P(n)$) is a simplicial complex, call it $P'(n)$.  Since each $B'(a)$ is a $3$-ball with the same boundary as $B(a)$, the complexes $P'(n)$ and $P(n)$ are homeomorphic.  That is, $P'(n)$ is also a $3$-sphere.

Now, for each $a \in A(n)$ and $u \in [n]$, remove from $P'(n)$ the triangle $T'(a, u)$ defined by
\[T'(a, u) = \{q(a)\} \cup R(a, u),\]
merging the two tetrahedra it borders into the same facet.
Call the resulting collection of faces $Q(n)$.  Recall that the edge $R(a, u)$ of $T'(a, u)$ is the unique interior edge of the disk $D(a, u)$.  Therefore, it follows from Lemma~\ref{l:thebigone} that removing a triangle $T'(a, u)$ creates exactly one non-simplicial facet of $Q(n)$, which we will denote by $F(a, u)$.  That is, $F(a, u)$ is the combinatorial bipyramid obtained from the two tetrahedra
\[
	\begin{array}{lll}
		T'(a, u) \cup \{a - u\}, \\
		T'(a, u) \cup \{a + u + 1\},
	\end{array}
\]
of $P'(n)$, by removing their intersection $T'(a, u)$.  The $5$ vertices of $F(a, u)$ are
\[
\V(F(a, u)) = T'(a, u) \cup E(a, u) = \{q(a)\} \cup I(a, u, 1).
\]

We must show that $Q(n)$ is in fact a \emph{polyhedral complex}---that is, that every two faces of $Q(n)$ intersect in a single face. As $P'(n)$ is a simplicial complex, we only need to show that each $F(a, u)$ intersects the other faces of $Q(n)$ properly.
 This follows from the fact that
\[F(a, u) \cap \partial B'(a) = D(a, u),\]
together with Lemma~\ref{l:thebigone}.

Clearly the polyhedral complex $Q(n)$ is homeomorphic to $P'(n)$, hence is a $3$-sphere.
Finally, $Q(n)$ is obtained from $P(n)$ by adding $|A(n)|$ vertices.  Therefore $Q(n)$ has
\[4n+4 + |A(n)| = 5n + 4\]
vertices.  Furthermore, as noted above, we have exactly one non-simplicial facet $F(a, u)$ of $Q(n)$ for each $(a, u) \in A(n) \times [n]$.  That is, $Q(n)$ has
\[|A(n) \times [n]| = n^2\]
non-simplicial facets.
\end{proof}

\textbf{Acknowledgments}:
We thank G\"{u}nter Ziegler for helpful comments on an earlier version of this paper.

\bibliography{gbiblio}{}

\begin{thebibliography}{1}

\bibitem{Alon:fewPolytopes}
Noga Alon.
\newblock The number of polytopes, configurations and real matroids.
\newblock {\em Mathematika}, 33(1):62--71, 1986.

\bibitem{Erickson}
Jeff Erickson.
\newblock New lower bounds for convex hull problems in odd dimensions.
\newblock {\em SIAM J. Comput.}, 28(4):1198--1214 (electronic), 1999.

\bibitem{GoodmanPollack:fewPolytopes-86}
Jacob~E. Goodman and Richard Pollack.
\newblock Upper bounds for configurations and polytopes in {${\mathbf{R}}\sp
  d$}.
\newblock {\em Discrete Comput. Geom.}, 1(3):219--227, 1986.

\bibitem{Grunbaum}
Branko Gr{\"u}nbaum.
\newblock {\em Convex polytopes}, volume 221 of {\em Graduate Texts in
  Mathematics}.
\newblock Springer-Verlag, New York, second edition, 2003.
\newblock Prepared and with a preface by Volker Kaibel, Victor Klee and
  G\"unter M.\ Ziegler.

\bibitem{Kalai-manyspheres}
Gil Kalai.
\newblock Many triangulated spheres.
\newblock {\em Discrete Comput. Geom.}, 3(1):1--14, 1988.

\bibitem{Munkres}
James~R. Munkres.
\newblock {\em Elements of algebraic topology}.
\newblock Addison-Wesley Publishing Company, Menlo Park, CA, 1984.

\bibitem{Pfeifle-Ziegler:many3-spheres}
Julian Pfeifle and G{\"u}nter~M. Ziegler.
\newblock Many triangulated 3-spheres.
\newblock {\em Math. Ann.}, 330(4):829--837, 2004.

\bibitem{Stanley:CohenMacaulayUBC-75}
Richard~P. Stanley.
\newblock The upper bound conjecture and {C}ohen-{M}acaulay rings.
\newblock {\em Studies in Appl. Math.}, 54(2):135--142, 1975.

\bibitem{Ziegler}
G{\"u}nter~M. Ziegler.
\newblock {\em Lectures on polytopes}, volume 152 of {\em Graduate Texts in
  Mathematics}.
\newblock Springer-Verlag, New York, 1995.

\end{thebibliography}
\bibliographystyle{plain}
\end{document}